\documentclass[12pt]{article}
\usepackage{amsmath}
\usepackage{amsfonts}
\usepackage{amsthm}
\usepackage{amssymb}
\usepackage{color}

\newtheorem{theorem}{Theorem}[section]

\newtheorem{proposition}[theorem]{Proposition}
\newtheorem{observation}[theorem]{Observation}

\newtheorem{corollary}[theorem]{Corollary}

\theoremstyle{definition}
\newtheorem{definition}[theorem]{Definition}
\theoremstyle{remark}
\newtheorem{remark}[theorem]{Remark}

\numberwithin{equation}{section}
\numberwithin{theorem}{section}
\numberwithin{figure}{section}

\oddsidemargin 0.4truecm   
\evensidemargin 0pt \marginparwidth 40pt \marginparsep 10pt

\topmargin -1.7truecm \headsep 40pt \textheight 21.5truecm
\textwidth 15truecm


\newcommand{\lgc}{{\rm lgca}\hskip0.02cm}

\def\f2{\mathbb{F}_2}

\def\lip{\hskip0.02cm{\rm Lip}\hskip0.01cm}


\newcommand{\bbN}{\mathbb{N}}

\newcommand{\PP}{{\mathcal{P}}}

\newcommand{\tht}{\theta}

\newcommand{\vf}{\varphi}

\newcommand{\bbZ}{\mathbb{Z}}
\newcommand{\gn}{\mathcal{G}_n}
\newcommand\remove[1]{}








\newcommand{\lb}{\label}

\newcommand{\wtw}{if and only if}

\newcommand{\buoo}{without loss of generality}

\newcommand{\DEF}{\buildrel {\mbox{\tiny def}}\over =}

\begin{document}

\title{A characterization of superreflexivity through embeddings of lamplighter
groups}

\author{Mikhail~I.~Ostrovskii and Beata~Randrianantoanina}

\maketitle

\begin{abstract}
We prove that  finite lamplighter groups $\{\mathbb{Z}_2\wr\mathbb{Z}_n\}_{n\ge 2}$
with a standard set of generators
 embed  with uniformly bounded distortions into
any non-superreflexive Banach space, and therefore form a set of test-spaces for
superreflexivity. Our proof is inspired by
the well known identification of  Cayley graphs of  infinite lamplighter groups  with
the horocyclic product of trees. We cover $\mathbb{Z}_2\wr\mathbb{Z}_n$
by three    sets with a structure similar to
a horocyclic product of trees, which    enables
us to construct   well-controlled embeddings.
\end{abstract}

{\small \noindent{\bf 2010 Mathematics Subject Classification.}
Primary: 46B85; Secondary: 05C12, 20F65,  30L05.}\smallskip

{\small \noindent{\bf Keywords.} distortion of a bilipschitz
embedding, horocyclic product of trees, lamplighter group,
Lip\-schitz map, metric embedding,  Ribe
program, superreflexivity, word metric}


\section{Introduction}


One of the important directions in metric geometry is to find
purely metric characterizations of interesting classes of Banach
spaces. For classes of spaces determined by finite-dimensional
subspaces, this direction is a part of the {\it Ribe program}
which was described by Bourgain \cite{Bou86} who proved the first
metric characterization of superreflexivity. See \cite{Nao12} for
more information on the Ribe program. The goal of this paper is to
prove that lamplighter groups are test spaces for
superreflexivity.

\begin{definition}[\cite{Ost13a}]\label{D:TestSp} Let $\mathcal{P}$ be a class of Banach spaces
and let $T=\{T_\alpha\}_{\alpha\in A}$ be a set of metric spaces.
We say that $T$ is a set of {\it test spaces} for $\mathcal{P}$ if
the following two conditions are equivalent: {\bf (1)}
$X\notin\mathcal{P}$. {\bf (2)} The spaces
$\{T_\alpha\}_{\alpha\in A}$ admit bilipschitz embeddings into $X$
with uniformly boun\-ded distortions.
\end{definition}

Several different sets of test-spaces for superreflexivity
are known: (1) binary trees
\cite{Bou86,Mat99,Bau07,Klo14}, (2) binary diamond and Laakso graphs
\cite{JS09,MN13}, (3) multibranching diamond and Laakso graphs \cite{OR17}. See
\cite{Ost16} for a survey on this matter written in 2014.

In this paper we add one more item to this list: (4) the set  of Cayley graphs of finite  lamplighter
groups. Moreover, we observe that the Cayley graph of an infinite lamplighter
group also is a test space for superreflexivity.

 Some of the characterizations (1)--(3) are independent in the
  sense  that the corresponding families of test
spaces do not admit bilipschitz embeddings into each other with
uniformly bounded distortions. In non-obvious cases this was shown
in \cite{Ost14,LNOO18} for finite binary trees and diamond graphs, and in
\cite{OO17} for diamond and Laakso graphs.


Lamplighter groups are a very interesting class of groups which has been a rich source of important examples in geometric group theory.
In 2008, Naor and Peres  \cite[Section~4]{NP08} proved that the finite lamplighter groups $\mathbb{Z}_2\wr\mathbb{Z}_n$, with
metric defined as a word length with respect to natural sets of
generators, are embeddable into $L_1$ with uniformly bounded
distortions.  The  first goal of this
paper is to strengthen this result and to prove their embeddability into
an arbitrary nonsuperreflexive Banach space with uniformly bounded
distortions. As a consequence we get a new metric characterization of
superreflexivity, see Corollary \ref{C:MetChLamp}.

We consider the following special case of the general wreath
product construction (see, for example, \cite[p.~214]{Har00} for
the general definition).

\begin{definition}[{\cite[p.~129]{DK18}}]\label{D:Wreath} Let $H$ and $L$ be two groups
and $L^H$ be the set of all $L$-valued finitely supported
functions on $H$. Then the {\it  wreath product} $L\wr H$ is
defined as the set $L^H\times H$ equipped with the multiplication
\begin{equation*}
((x_h)_{h\in H}, g)\cdot ((y_h)_{h\in H}, k) := ((x_h\cdot
y_{gh})_{h\in H},gk).
\end{equation*}
\end{definition}

Our main interest will be the wreath product $\mathbb{Z}_2\wr\mathbb{Z}_n$,
for $n\in\bbN$, $n\ge 2$, which we denote by $\mathcal{G}_{n}$,
this group is called the {\it lamplighter group}, see \cite{T17}
for a nice introduction including an explanation for the name.

We identify $\mathbb{Z}_2^{\mathbb{Z}_n}$ with  the  family  of
all subsets of $\mathbb{Z}_n$ by identifying $x = (x_k)_{k\in
\mathbb{Z}_n}$ with $\{j \in \mathbb{Z}_n : x_j = 1\}$, the group
operation on $\mathbb{Z}_2^{\mathbb{Z}_n}$ is  the symmetric
difference. From now on we will abuse notation and treat an
element $x \in \mathbb{Z}_2^{\mathbb{Z}_n}$ as a subset of
$\mathbb{Z}_n$.

Considering an element $(x,k)\in\gn$, we call $x$ the {\it set of
positions where the lamp is on}, and its complement - the {\it set
of positions where the lamp is off}. The number $k\in\bbZ_n$ is called
the {\it location of the lamplighter}.

It is easy to see that the elements $a=(\{0\}, 0)$ and
$t=(\emptyset, 1)$ generate $\gn$. Observe that multiplication by
$a=(\{0\}, 0)$ on the right is the act of changing the lamp {\it
at the current location of the lamplighter} and multiplication by
$t=(\emptyset, 1)$ on the right is the act of the lamplighter
moving one position to the next lamp in the `positive' direction
around the circle.

We consider the metric $\rho$ on $\gn$ defined as the metric of
the left-invariant Cayley graph with respect to the  set of
generators $S=\{t, ta\}$. This means that $x$ is adjacent to $y$
if and only if $x=ys$ or $y=xs$ (i.e $x=ys^{-1}$) for one of the
generators $s\in S$. Observe that the generator $ta$ acts by first
moving one step in the `positive' direction, and then changing the
state of the lamp at the final location of the lamplighter. See
 Section~\ref{S:Step1}  for a more detailed description and an equivalent formula \eqref{E:Rho} for the metric.

The first main result of this paper is

\begin{theorem}\label{T:FinWrNonSR} For any nonsuperreflexive Banach space $X$ the Cayley graphs
of $\mathbb{Z}_2\wr\mathbb{Z}_n$ $(n\ge 2)$ corresponding to the
set $S= \{t, ta\}$ admit embeddings into $X$ with uniformly
bounded distortions.
\end{theorem}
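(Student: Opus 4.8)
The plan is to exploit the well-known characterization of non-superreflexivity via James-type finite tree structures, and to transfer the "horocyclic product of trees" geometry of the lamplighter group into any non-superreflexive $X$. The starting point is the standard fact (the quantitative James characterization underlying Bourgain's theorem) that $X$ is non-superreflexive if and only if for every $\e>0$ and every $N$ there exist, in the unit ball of $X$, vectors arranged as a $\delta$-separated finite tree of depth $N$ whose branches behave like a biorthogonal/nearly-monotone basic sequence; equivalently, $X$ contains, with constants independent of $N$, bilipschitz copies of binary trees of arbitrary depth with good upper and lower bounds on branch sums. Since the paper's abstract explicitly invokes the identification of the lamplighter Cayley graph with a horocyclic product of two trees, the natural strategy is to realize the two tree factors simultaneously inside $X$ using two such independent finite tree systems, and then assemble the embedding of $\gn$ from a horocyclic-product-type formula.

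\medskip

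\noindent\textbf{The key steps, in order.} First I would make precise the equivalent metric formula \eqref{E:Rho} for $\rho$ promised in Section~\ref{S:Step1}, so that $\rho((x,k),(y,m))$ is expressed in terms of the symmetric difference of the lamp-sets together with the cost of routing the lamplighter around the circle $\mathbb{Z}_n$; this is what makes the two "directions" (clockwise and counterclockwise sweeps of the lamplighter) look like two rooted trees fused along a common horocycle. Second, following the abstract's stated method, I would cover $\gn$ by three pieces, each carrying a structure close to a genuine horocyclic product of trees, and reduce the problem to embedding each piece with uniform distortion and then gluing the three embeddings (controlling the distortion on pairs of points lying in different pieces). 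Third, inside the non-superreflexive space $X$ I would extract, for the relevant depth determined by $n$, two finite tree families with uniformly good basis constants, and define the embedding of an element $(x,k)$ as a sum of tree-vectors indexed by the lamp positions in $x$ together with displacement vectors encoding the lamplighter location $k$; the finitely-supported nature of $x$ (Definition~\ref{D:Wreath}) guarantees these sums are finite. Fourth, I would verify the two-sided Lipschitz bounds: the upper bound follows from the triangle inequality since adjacent vertices differ by a single generator and hence by a bounded number of tree-edge vectors, while the lower bound follows from the near-monotonicity/biorthogonality of the tree bases, which forces the images of $\rho$-far points to be far in $X$.

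\medskip

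\noindent\textbf{The main obstacle} I anticipate is the lower distortion estimate at the seams of the three-piece cover and along the horocyclic identification. The horocyclic product forces the two tree-coordinates to move in opposite "height" directions as the lamplighter traverses the circle, and on a finite cyclic group $\mathbb{Z}_n$ (as opposed to the bi-infinite line underlying the usual infinite lamplighter picture) this identification cannot be globally consistent — which is exactly why a single tree does not suffice and the cover into three overlapping horocyclic-product-like regions is needed. The delicate point is to choose the overlaps and the displacement vectors so that a point represented in two different pieces receives comparable images, and so that the lower bound coming from the basis constants is not destroyed by cancellation between the clockwise and counterclockwise contributions. I would handle this by arranging the two tree systems to sit in complementary, nearly-biorthogonal parts of $X$ so that their contributions to the norm add rather than cancel, and by verifying the seam estimates separately for the constantly many types of cross-piece point pairs; all distortion constants produced this way are absolute, independent of $n$, which is what Definition~\ref{D:TestSp} requires.
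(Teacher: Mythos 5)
Your outline reproduces the paper's high-level plan (a three-piece cover of $\gn$ with horocyclic-product-like pieces, trees inside a non-superreflexive space), but it is missing the two ideas that actually make the proof work, and the step you identify as the main obstacle is an artifact of not seeing the first of them. You treat cross-piece pairs (``seams'') as the hard case and propose to handle them by a case analysis together with placing tree systems in nearly-biorthogonal parts of $X$. In the paper there are no cross-piece pairs at all: the three paths $P_1,P_2,P_3$ of length $\frac23 n$ are chosen so that every pair of points of $\mathbb{Z}_n$ lies on a common path, hence every pair of elements of $\gn$ lies in a common piece $\PP_{i,n}$. What remains --- and what your proposal has no mechanism for --- is that the $i$-th map, bilipschitz only on $\PP_{i,n}$, must still be defined and globally Lipschitz on all of $\gn$, so that the ``wrong'' coordinates cannot spoil the upper bound. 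The paper gets this from Lipschitz extension theory: each $\vf_{i,n}$ maps $\PP_{i,n}$ into $W_n\subseteq T_n\oplus_\infty T_n$, and the $\ell_\infty$-sum of two metric trees is an absolute $1$-Lipschitz retract (metric convexity and the binary intersection property pass to $\ell_\infty$-sums), so $\vf_{i,n}$ extends to a Lipschitz map $\bar{\vf}_{i,n}$ on all of $\gn$. The triple $(\bar{\vf}_{1,n},\bar{\vf}_{2,n},\bar{\vf}_{3,n})$, taken in the $\ell_\infty$-sum of six trees, is then automatically bilipschitz: the coordinate $i$ containing a given pair supplies the lower bound, and the max-norm makes any anti-cancellation arrangement unnecessary. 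Only at the very end is this model space carried into $X$ (Bourgain's tree embeddings, Mazur's basic-sequence method to get the $\ell_\infty$-sum with uniform constants, linear interpolation over metric edges); your ``complementary nearly-biorthogonal parts of $X$'' belongs to this routine last step, not to the gluing, which cannot be done pair-by-pair inside $X$ without an extension theorem.

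The second gap is your within-piece lower bound. Defining the image of $(x,k)$ as a sum of tree-vectors indexed by the lamp positions in $x$ plus displacement vectors, with the lower bound coming from biorthogonality, does not capture the word metric: take $x=\{n/2\}$, $y=\emptyset$, $k=l=0$. Then $\rho((x,k),(y,l))$ is of order $n$ by \eqref{E:Rho}, since the lamplighter must travel to position $n/2$ and back, while your two images differ by a single unit tree-vector, so the co-Lipschitz constant degenerates linearly in $n$; weighting the vectors by travel cost would instead destroy the Lipschitz bound, since flipping the lamp at the current position is a single generator step. The actual construction encodes the entire lamp configuration along each of the two arcs joining $k$ to a marked basepoint $v_0$ as a vertex (a branch prefix) of a binary tree of depth $n$, so that a single discrepant faraway lamp forces the greatest common ancestor to be shallow and the tree distance to be of order $n$. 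The two-sided comparison with $\rho$ then runs through the tour formula \eqref{E:Rho}, the common-ancestor computation of Observation~\ref{O:CrucObs}, and a case distinction according to whether the minimal interval containing $\{k,l\}\cup x\triangle y$ contains the basepoint --- and it is precisely here that the restriction $k,l\in P_i$, i.e.\ the $n/6$ buffer around $v_0$, is used. This prefix encoding, not biorthogonality of lamp-indexed vectors, is the heart of Step 1, and your outline does not reconstruct it.
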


\begin{remark} In \cite{NP08}, when proving embeddability into $L_1$,
Naor and Peres considered $\mathbb{Z}_2\wr\mathbb{Z}_n$ with the
set of generators equal to $\{t,a\}$ instead of  $\{t,ta\}$ as we
do. However, it is easy to see that  the metrics induced by these
two generating sets are bilipschitz equivalent to each other with
a distortion $4$.
\end{remark}

In \cite{NP08}  Naor and Peres  constructed two embeddings into $L_1$, one based on irreducible
 representations   of finite lamplighter groups, and the other motivated by what they refer to as  a `direct geometric reasoning'.
Our embedding technique is quite
different from either of the embeddings in \cite{NP08}.
Our approach is inspired by the description of the Cayley graph
of the infinite lamplighter group $\mathbb{Z}_q\wr\mathbb{Z}$, for $q\in\bbN$, $q\ge 2$, as a  {\it
horocyclic product} of two trees, introduced  by Bartholdi and Woess
\cite{BW05,Woe05}, and by
 the analysis of the
metric structure of horocyclic products of trees by
Stein and Taback in \cite{ST13} who  proved, among other results,
\begin{theorem}\lb{ST}{\rm \cite[Corollary~10]{ST13}}
For any $q\in\bbN$, $q\ge 2$, the Cayley graph
of $\mathbb{Z}_q\wr\mathbb{Z}$ with the set of generators $\{t, ta, \dots, ta^{q-1}\}$ admits an embedding into an $\ell_1$-sum of two $q$-branching trees  with
distortion bounded by $4$.
\end{theorem}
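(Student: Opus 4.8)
The plan is to prove Theorem~\ref{ST} by making the Bartholdi--Woess identification of the Cayley graph with a horocyclic product of trees completely explicit, and then embedding that product into the $\ell_1$-sum of its two factors by the tautological inclusion. Recall that the $q$-branching tree $T$ is the tree in which every vertex has exactly one neighbor above it and $q$ neighbors below it; fix the associated height (Busemann) function $h\colon T\to\bbZ$, which is $1$-Lipschitz and increases by $1$ along every upward edge, and for vertices $u,u'$ write $u\curlyvee u'$ for their \emph{confluent}, the lowest vertex lying above both. Then $d_T(u,u')=2h(u\curlyvee u')-h(u)-h(u')$. The target is the Diestel--Leader graph $DL(q,q)=\{(u,v)\in T\times T:\ h_1(u)+h_2(v)=0\}$, in which $(u,v)$ is adjacent to $(u',v')$ exactly when $u\sim u'$ in the first copy and $v\sim v'$ in the second.

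First I would set up the encoding. To $(x,k)\in\mathbb{Z}_q\wr\mathbb{Z}$ I associate $(\xi,\eta)\in T\times T$, where $\xi$ is the vertex of the first copy at height $-k$ whose downward branch records the lamps $(x_j)_{j\le k}$ (read toward $-\infty$), and $\eta$ is the vertex of the second copy at height $k$ whose downward branch records $(x_j)_{j>k}$ (read toward $+\infty$). Finite support of $x$ makes both branches eventually $0$, so $\xi,\eta$ are genuine vertices, and by construction $h_1(\xi)+h_2(\eta)=0$. I would then check that this is a bijection onto $DL(q,q)$ carrying the generators to the $DL$-edges: applying a generator advances the lamplighter $k\mapsto k+1$, which is the downward (branching) step in $T_1$ appending the newly set lamp at position $k+1$ — its $q$ possible values matching the $q$ generators $t,ta,\dots,ta^{q-1}$ — together with the forced upward step in $T_2$ dropping the top entry of the right record; the inverses reverse the two roles. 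This identifies the Cayley graph with $DL(q,q)$, so it suffices to embed $DL(q,q)$ into $T_1\oplus_1 T_2$.

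The key computation is the distance formula in $DL(q,q)$. Given $(u,v)$ and $(u',v')$, every edge changes $h_1$ by $\pm1$ and $h_2$ by $\mp1$, so a path is determined, up to branch choices, by its $h_1$-profile, a lattice path in $\bbZ$ from $k:=h_1(u)$ to $k':=h_1(u')$. Writing $a:=h_1(u\curlyvee u')$ and $b:=h_2(v\curlyvee v')$, the profile must rise to the ceiling $a$ (so that in $T_1$ one can cross from the branch of $u$ to that of $u'$) and descend to the floor $-b$ (so that in $T_2$, where $h_1=-h_2$, one reaches $v\curlyvee v'$). Minimizing the total variation of a lattice path from $k$ to $k'$ that attains both extreme levels gives
\[
d_{DL}\bigl((u,v),(u',v')\bigr)=2a+2b-|k-k'|=d_1(u,u')+d_2(v,v')-|h_1(u)-h_1(u')|.
\]
I expect the realizability step, rather than this arithmetic, to be the main obstacle: one must verify that descending to the floor, then climbing to the ceiling, then dropping to the target lets the two tree-walks reach their respective confluents simultaneously with no wasted steps, the floor and ceiling being exactly the moments at which a branch is switched.

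Finally I would estimate the distortion of the inclusion $\iota(u,v)=(u,v)$ into $T_1\oplus_1 T_2$, whose metric is $d_\oplus=d_1+d_2$. Since $h_i$ is $1$-Lipschitz, $|h_1(u)-h_1(u')|\le d_1(u,u')$ and, because $h_2=-h_1$ on $DL$, also $|h_1(u)-h_1(u')|\le d_2(v,v')$; hence $|h_1(u)-h_1(u')|\le\min(d_1,d_2)\le\tfrac12 d_\oplus$. Combined with the formula above this yields $\tfrac12 d_\oplus\le d_{DL}\le d_\oplus$, that is $d_{DL}\le d_\oplus\le 2\,d_{DL}$, so $\iota$ has distortion at most $2$; in particular it lies within the bound $4$ asserted in the statement. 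The estimate is uniform in $q$ because the factor trees and the inclusion are defined in a $q$-independent manner, the only $q$-dependence being the branching number of $T$, which does not enter the metric comparison.
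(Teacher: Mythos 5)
The paper contains no proof of this statement: Theorem~\ref{ST} is quoted directly from \cite[Corollary~10]{ST13}, so there is no internal argument to compare yours against. Judged on its own, your proposal is correct, and it is essentially a self-contained reconstruction of the route in the literature: the Bartholdi--Woess identification of the Cayley graph with the Diestel--Leader graph $DL(q,q)$ \cite{BW05,Woe05}, followed by the exact distance formula on $DL(q,q)$. Your bookkeeping checks out: with the ``move, then set the lamp at the new position'' reading of right multiplication by $ta^b$, recording $(x_j)_{j\le k}$ in the first tree and $(x_j)_{j>k}$ in the second makes each generator act as ``one branching step down in $T_1$ (the $q$ choices matching the $q$ possible new lamp values, since $x_{k+1}+b$ ranges over all of $\mathbb{Z}_q$) together with the forced step up in $T_2$'', and the inverses reverse the roles, so the correspondence is a graph isomorphism onto $DL(q,q)$; a fully rigorous write-up would also spell out the standard horocyclic labelling of the $q$-branching tree (that the vertices at a fixed Busemann level are exactly the eventually-zero records), which you assert rather than prove. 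Your distance formula $d_{DL}=d_1+d_2-|h_1(u)-h_1(u')|$ is right, and you correctly locate where the work lies: the lower bound uses that each projection of a $DL$-walk is a genuine walk in a tree, hence passes through every vertex of the geodesic and in particular the confluent, forcing the $h_1$-profile to attain the ceiling $a$ and the floor $-b$ (both of which automatically lie on the correct sides of $k$ and $k'$, so the minimal total variation is indeed $2a+2b-|k-k'|$); the upper bound is realized by the down--up--down (or up--down--up) profile, where descents are free branch choices and ascents are forced. One wording slip: the two tree-walks do \emph{not} reach their confluents simultaneously --- in the down-first profile, $T_2$ hits its confluent at the bottom of the initial descent of $h_1$, while $T_1$ hits its confluent only at the top of the subsequent ascent --- but nothing in the step count depends on simultaneity, only on no step being wasted, which your construction ensures. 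Finally, the estimate $|h_1(u)-h_1(u')|\le\min\left(d_1,d_2\right)\le\tfrac12\left(d_1+d_2\right)$ gives distortion at most $2$ for the tautological inclusion into $T_1\oplus_1 T_2$, which is stronger than the bound $4$ asserted in the statement, and the uniformity in $q$ is clear since $q$ enters only through the branching number.
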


We refer the reader to \cite{T17} and \cite{Woe13} for very nice presentations of  horocyclic products of trees and their applications to lamplighter groups.
We give a more detailed overview of our method  in Section~\ref{outline}.

\begin{remark}\label{R:InfLamp}
It follows from Theorem~\ref{ST}, \cite{Bou86}, and \cite{Ost12}
that for all $q\in\bbN$, $q\ge 2$, the Cayley graph of
$\mathbb{Z}_q\wr\mathbb{Z}$ admits an embedding into any
non-super\-re\-fle\-xive space with distortion independent of $q$,
 as
 was shown in a similar case of hyperbolic groups in
\cite[Section 2]{Ost14} (cf. also  Section~\ref{step3} below).

  The bilipschitz embeddability of the infinite group
$\mathbb{Z}_2\wr\mathbb{Z}$ into any non-super\-re\-fle\-xive
space can also be derived from Theorem~\ref{T:FinWrNonSR} and
results of \cite{Bou86,Ost12}.
\end{remark}

In \cite{LPP96} it is proved that there exists a constant $c>0$ so that  for every $n\in\bbN$ a complete binary tree of depth $cn$ embeds with constant distortion into $\gn$. In Section~\ref{S:Tree-proof} below we show an alternative simple proof of this fact
using our  construction.

As a consequence, and by  Theorem~\ref{T:FinWrNonSR},
Remark~\ref{R:InfLamp}, and \cite{Bou86}, we obtain

\begin{corollary}\label{C:MetChLamp} The sequence of Cayley graphs
for $\{\mathbb{Z}_2\wr\mathbb{Z}_n\}_{n\ge 2}$ with respect to the set
 of generators $S=\{t, ta\}$ is a set of test-spaces for
superreflexivity. The Cayley graph of $\mathbb{Z}_2\wr\mathbb{Z}$
with respect to any finite  set of generators is a test space for
superreflexivity.
\end{corollary}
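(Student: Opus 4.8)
The statement asserts that two families qualify as test-spaces in the sense of Definition \ref{D:TestSp}, so for each I must verify the equivalence of its conditions (1) and (2). For a single space such as $\mathbb{Z}_2\wr\mathbb{Z}$, condition (2) means ordinary bilipschitz (finite-distortion) embeddability. The forward implications (1) $\Rightarrow$ (2) are already in hand: for the finite family it is precisely Theorem \ref{T:FinWrNonSR}, and for $\mathbb{Z}_2\wr\mathbb{Z}$ it is recorded in Remark \ref{R:InfLamp}, which derives bilipschitz embeddability into every non-superreflexive space from Theorem \ref{T:FinWrNonSR} together with \cite{Bou86,Ost12}. Thus the content of the corollary is the reverse implication (2) $\Rightarrow$ (1) in each case, and the plan is to route both through binary trees and Bourgain's theorem.

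For the family $\{\gn\}_{n\ge2}$, suppose every $\gn$ embeds into $X$ with distortion at most a fixed $D$. By \cite{LPP96} (reproved in Section \ref{S:Tree-proof}) there are absolute constants $c>0$ and $C$ such that the complete binary tree of depth $\lfloor cn\rfloor$ embeds into $\gn$ with distortion at most $C$. Composing the two maps embeds binary trees of unbounded depth into $X$ with distortion at most $CD$, so by Bourgain's metric characterization of superreflexivity \cite{Bou86} the space $X$ is non-superreflexive, which is (1).

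For $\mathbb{Z}_2\wr\mathbb{Z}$ with an arbitrary finite generating set, I first note that any two finite generating sets of a group induce bilipschitz-equivalent word metrics, so it suffices to treat one of them and the choice is immaterial. Assuming a bilipschitz embedding of $\mathbb{Z}_2\wr\mathbb{Z}$ into $X$, I again reduce to binary trees, but now the trees must be located inside the infinite group. The key observation is a \emph{local} comparison: the ball of radius $R$ about the identity in $\gn$ is isometric to the corresponding ball in $\mathbb{Z}_2\wr\mathbb{Z}$ as soon as $n$ exceeds a fixed multiple of $R$, since below that scale a geodesic cannot exploit the cyclic wrap-around of $\mathbb{Z}_n$. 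Fixing a depth $d$ and applying \cite{LPP96} with $n$ chosen large relative to $d$, the bounded-distortion image of the depth-$d$ tree lies in a ball of radius $O(d)$ that does not feel the wrap-around; hence the same tree embeds into $\mathbb{Z}_2\wr\mathbb{Z}$ with distortion at most $C$. As $d$ is arbitrary, $\mathbb{Z}_2\wr\mathbb{Z}$ contains all complete binary trees with uniform distortion, and composing with the embedding into $X$ and invoking \cite{Bou86} yields (1).

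The step requiring the most care is this last transfer, and in particular the order of quantifiers: one must fix the tree depth $d$ first and only then send $n$ to infinity, so that the bounded-distortion image of the fixed tree is confined to a ball small compared with the wrap-around scale $n$. Everything else is a composition of bilipschitz maps together with the two theorems already established, so I expect no further obstacle.
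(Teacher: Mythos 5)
Your proposal is correct, and it splits naturally into two halves relative to the paper. For the finite family you take essentially the paper's route: trees of depth proportional to $n$ sit inside $\gn$ with uniformly bounded distortion (you cite \cite{LPP96} directly, while the paper reproves this fact from its own construction, via the subset $U$ of the set $W_n$ of Section~\ref{S:Step1}), and composing with the assumed uniform embeddings $\gn\to X$ one concludes nonsuperreflexivity from Bourgain's theorem \cite{Bou86}; together with Theorem~\ref{T:FinWrNonSR} this gives the equivalence. For the infinite group, however, your argument is genuinely different. The paper invokes the identification of the Cayley graph of $\mathbb{Z}_2\wr\mathbb{Z}$ with the horocyclic product of two infinite binary trees \cite{Woe05} together with Theorem~\ref{ST} of Stein--Taback, and then exhibits an explicit bilipschitz copy of the infinite rooted binary tree inside $\mathbb{Z}_2\wr\mathbb{Z}$ (the elements $(A_1,A_2,k)$ with $A_2$ identically zero). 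You instead transfer finite trees from $\gn$ into $\mathbb{Z}_2\wr\mathbb{Z}$ by a local comparison: after a left translation (the Cayley graph metric is left-invariant, so any ball is isometric to the ball about the identity), the distortion-$C$ image of the depth-$d$ tree lies in a ball of radius $O(d)$ in $\gn$, and for $n$ a sufficiently large fixed multiple of $d$ such balls are isometric to the corresponding balls of $\mathbb{Z}_2\wr\mathbb{Z}$, since no geodesic between points of the ball can profit from wrapping around the cycle --- this is readable off the distance formula \eqref{E:Rho} and its analogue over $\mathbb{Z}$. Your quantifier discipline (fix $d$, then send $n\to\infty$) is exactly what makes this sound, and it is the reverse-direction counterpart of the finite-determination principle \cite{Ost12} that the paper uses for the forward implication in Remark~\ref{R:InfLamp}. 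What each approach buys: the paper's argument produces a concrete infinite tree inside $\mathbb{Z}_2\wr\mathbb{Z}$ at the cost of the horocyclic-product machinery and Theorem~\ref{ST}; yours avoids both, at the cost of the ball-isometry verification. Two points you leave implicit are easily supplied but worth stating: the restriction of the \cite{LPP96} embedding of the depth-$\lfloor cn\rfloor$ tree to the isometrically embedded subtree $T_d$ retains distortion at most $C$ and has Lipschitz constant $O(1)$ (the scale factor is forced to be bounded because $\gn$ has diameter $O(n)$), which is what confines the image to a ball of radius $O(d)$; and the reduction from an arbitrary finite generating set of $\mathbb{Z}_2\wr\mathbb{Z}$ to $S=\{t,ta\}$ via bilipschitz equivalence of word metrics, which the paper also uses tacitly.
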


\begin{remark}
We note that Theorem~\ref{ST} is valid for  $\mathbb{Z}_q\wr\mathbb{Z}$ for all $q\in \bbN$, $q\ge 2$. As we elaborate in Section~\ref{outline}, our proof of
 Theorem~\ref{T:FinWrNonSR} is inspired by the methods of Theorem~\ref{ST}, even though we do not apply its conclusion for our argument.

In our statement and proof of Theorem~\ref{T:FinWrNonSR}, for greater clarity of the presentation, we focused our attention on $\mathbb{Z}_2\wr\mathbb{Z}_n$, but it only requires straightforward adjustments of the proof to obtain the same conclusion as in Theorem~\ref{T:FinWrNonSR} for $\mathbb{Z}_q\wr\mathbb{Z}_n$ with the set of generators $\{t, ta, \dots, ta^{q-1}\}$,   for all $q\in \bbN$, $q\ge 2$. The only difference is that one needs to use  an $\ell_\infty$-sum of   two $q$-branching trees in place of binary trees. Similarly as in \cite{ST13}, the value of $q$ affects the branching of the trees, but not the number of the summands. Since for all $q$, a $q$-branching tree embeds almost isometrically into a binary tree, and since we always use an
$\ell_\infty$-sum of   two  trees,  the uniform bound on distortions does not depend on $q\in\bbN$.

\remove{For greater clarity of the presentation, we elected to present our proof of Theorem~\ref{T:FinWrNonSR} only for $\mathbb{Z}_2\wr\mathbb{Z}_n$, i.e. for $q=2$. However, similarly as in the proof of Theorem~\ref{ST}, the same arguments, with only straightforward  adjustments,   also work for $\mathbb{Z}_q\wr\mathbb{Z}_n$ with the set of generators $\{t, ta, \dots, ta^{q-1}\}$,  for all $q\in \bbN$, $q\ge 2$ (the only difference is that one needs to use $q$-branching trees in place of binary trees).}

Since, for all $q\in \bbN$, $q\ge 2$,  the Cayley graphs of
$\mathbb{Z}_q\wr\mathbb{Z}_n$  and $\mathbb{Z}_q\wr\mathbb{Z}$
with the set of generators $\{t, ta, \dots, ta^{q-1}\}$
  isometrically contain the Cayley graphs of
$\mathbb{Z}_2\wr\mathbb{Z}_n$ and $\mathbb{Z}_2\wr\mathbb{Z}$ with
generators $\{t, ta\}$, respectively, it follows from
Corollary~\ref{C:MetChLamp} and the adjusted version of
Theorem~\ref{T:FinWrNonSR}, that  for any $q\in \bbN$, $q\ge 2$,
the sequence of Cayley graphs of $\mathbb{Z}_q\wr\mathbb{Z}_n$
with the set of generators $\{t, ta, \dots, ta^{q-1}\}$ is a set
of test-spaces for superreflexivity.  Similarly, for any $q\in
\bbN$, $q\ge 2$, the Cayley graph of $\mathbb{Z}_q\wr\mathbb{Z}$,
with respect to any finite generating set, is  a test space for
superreflexivity.
\end{remark}

\section{Proof of Theorem~\ref{T:FinWrNonSR}}\label{S:FinWrNonSR}

\subsection{Outline of the proof}\lb{outline}

To simplify notation, we assume that $n$ is divisible by $6$. It is clear that the same
arguments work in general, but the formulas will be somewhat more
complicated.

We cover the Cayley graph of $\mathbb{Z}_n$ with respect to the
generating set $\{\pm1\}$ by three overlapping paths $P_1,P_2,P_3$ of length
$\frac23\,n$ each in such a way that each pair of points in
$\mathbb{Z}_n$ belongs to at least one of the paths (paths that
exclude three mutually disjoint thirds of the cycle $\mathbb{Z}_n$  work).

We consider the following three subsets of $\gn$ for  $i=1,2,3,$
\begin{equation}\lb{defP}
\mathcal{P}_{i,n}\DEF \{(x,k)\in \gn \ | \ x\subseteq  \mathbb{Z}_n, k\in P_i\}.
\end{equation}

We equip  $\PP_{i,n}$ with the metric inherited from the Cayley
graph of $\gn$ with respect to $S=\{t, ta\}$. We note that  the union of the three sets $\PP_{i,n}$ covers $\gn$.

Our approach is inspired by the well-known description of infinite
lamplighter groups $\bbZ_2\wr \bbZ$ as a horocyclic product of two
infinite trees \cite{BW05,Woe05} and by the bilipschitz embedding
of the word metric on $\bbZ_2\wr \bbZ$ into an $\ell_1$-sum of two
trees  \cite{ST13} (see Theorem~\ref{ST} above). Our first goal is
to cover $\mathbb{Z}_2\wr\mathbb{Z}_n$ by three    sets with a
structure similar to a horocyclic product of trees, which will
enable us to construct   well-controlled embeddings.

The sets $\PP_{i,n}$ are defined so that for all locations $k,l\in
P_i$, the length of the path that  is contained in $P_i$ and
connects $k$ and $l$ is at most twice the length of the shortest
path from $k$ to $l$ in $\bbZ_n$. For this reason, the metric
structure of the sets $\PP_{i,n}$ sufficiently resembles the
metric structure of a subset of $\bbZ_2\wr \bbZ_n$ and we are able
to construct sets $W_n$, which we think of as analogs of
horocyclic products of trees, and which are bilipschitz equivalent
to the sets $\PP_{i,n}$. The sets $W_{n}$ are defined as
specific subsets of the $\ell_\infty$-sum of two trees of depth
$n$.   To identify each element of $\PP_{i,n}$ with an element of
the Cartesian product $T_n\times T_n$ of two trees of depth $n$,
we first mark a vertex $v_0$ in $\bbZ_n$ which is the midpoint of
the complement of the path $P_i$, that is $v_0$ is at the distance
at least $n/6$ from any element $k\in P_i$. Modelling our
description on the identification of  $\bbZ_2\wr \bbZ_n$ with the
horocyclic product of trees (cf. \cite[p.~419]{Woe05}), for every
$(x,k)\in \PP_{i,n}$ we describe the set $x$ as the union of
$x\cap I_{k,+}$ and $x\cap I_{k,-}$, where $I_{k,+}, I_{k,-}$ are
two disjoint arcs in $\bbZ_n$ both with the  endpoints $v_0$ and
$k$. Each set $x\cap I_{k,+}$ and $x\cap I_{k,-}$ is encoded by a
sequence of $0$s and $1$s of  length equal to the number of
vertices in $I_{k,+}$ and  $I_{k,-}$, respectively. This naturally
encodes each element $(x,k)\in \PP_{i,n}$ by two elements of a
binary tree whose levels (in a tree) add up to $n$. We verify in
Section~\ref{S:Step1} that this encoding is metrically faithful on
 $\PP_{i,n}$, that is, we construct bilipschitz embeddings
 $$\vf_{i,n}:\PP_{i,n}\to T_n\oplus_\infty T_n,$$
 with uniformly bounded distortions, see Section~\ref{S:Step1} for details. This completes the first and main step of our proof.

The next step of our proof is   a routine application of the well-known theory of Lipschitz retracts to conclude that, for each $i\in\{1,2,3\}$, the bilipschitz map
 $\vf_{i,n}:\PP_{i,n}\to T_n\oplus_\infty T_n$, constructed in the first step, can be extended to a Lipschitz map $\bar{\vf}_{i,n}$ from the entire $\gn$ into an $\ell_\infty$-sum of two metric trees of depth $n$, see Section~\ref{step2} for details.

In the final step of our proof we define the map $\Phi_n$ from
$\gn$ into  an $\ell_\infty$-sum of six metric trees of depth $n$
by
$$\Phi_n(x,k)\DEF (\bar{\vf}_{1,n}(x,k), \bar{\vf}_{2,n}(x,k),
\bar{\vf}_{3,n}(x,k)).$$

Clearly, the maps $\Phi_n$ are Lipschitz with the same Lipschitz constants as those of
the maps $\bar{\vf}_{i,n}$, for $i=1,2,3$.
Since  the paths $P_1,P_2,P_3$ were chosen   in such a way that for any two elements
$(x,k), (y,l)\in \gn$, there exists $i\in\{1,2,3\}$ so that $(x,k), (y,l)\in \PP_{i,n}$, it follows that the map $\Phi_n$ is co-Lipschitz with same constant as the map
$\vf_{i,n}$ (in our construction all maps $\{\vf_{i,n}\}_{i=1}^3$ have the same
co-Lipschitz  constant), see Section~\ref{step3}.

Hence to finish the proof of Theorem~\ref{T:FinWrNonSR}, it is
enough to verify that for all $n\in\bbN$, the $\ell_\infty$-sum of
six metric trees of depth $n$ embeds  into any non-superreflexive
Banach space $X$ with uniformly bounded distortions. This follows
readily by known techniques and results on bilipschitz
embeddability of trees into any non-superreflexive Banach space,
and on extension of bilipschitz embeddings into Banach spaces from
vertex sets to graphs to the corresponding $1$-dimensional
complexes. In fact it even suffices to prove the existence of
Lipschitz maps that satisfy slightly weaker requirements,   see
Section~\ref{step3} for details.

\subsection{Step 1}\label{S:Step1}

We define the paths $P_1$, $P_2$ and $P_3$,  to be  arcs of
lengths $\frac23n$ in $\bbZ_n$ with endpoints $[\frac16
n,\frac56n]$,  $[\frac12n, \frac16n]$ and $[\frac56n, \frac12n]$,
respectively (recall that we assumed that $n$ is divisible by
$6$).

Let $\tht_n:\bbZ_n\to \bbZ_n$ be the `rotation' of $\bbZ_n$ by an arc of length $n/3$, i.e. for each $k\in \bbZ_n$, $\tht_n(k)= k+\frac n3$, where the addition is in the sense of $\bbZ_n$. The rotation $\tht_n$ induces the  isometry $\overline{\tht_n}$ of $\gn$ onto itself, defined by
\[\overline{\tht_n}(x,k)=(\tht_n(x),\tht_n(k)),\]
where, as usual, $\tht_n(x)$ denotes the image of the set $x$ under the action of
$\tht_n$.

Note, that the paths $P_1$, $P_2$, $P_3$, satisfy
$P_2=\tht_n(P_1)$ and $P_3=\tht_n^2(P_1)$, and
the metric spaces $\{\PP_{i,n}\}_{i=1}^3$,   defined by \eqref{defP},
 are isometric to each other, specifically
 \begin{equation}\lb{isoP}
 \PP_{2,n}=\overline{\tht_n}(\PP_{1,n}),\ \ \ \ \ \
 \PP_{3,n}={\overline{\tht_n}}^{\, 2}(\PP_{1,n}).
 \end{equation}

Let $T_n$ be a binary tree of depth $n$, that is, $T_n$ is the
graph whose vertices are labelled by sequences of 0s and 1s of
lengths $\le n$ with the usual graph distance $d_T$. We consider
the $\ell_\infty$-sum $T_n\oplus_\infty T_n$ defined as the
Cartesian product $T_n\times T_n$ endowed with the metric
\begin{equation*}\label{E:Dinfty} d_\infty((A_1, A_2),(B_1,B_2))\DEF \max\{d_T(A_1,B_1),~
d_T(A_2,B_2)\}.\end{equation*}

We define
$$W_{n}\DEF\left\{(A_1, A_2)\in T_n\oplus_\infty T_n\ : \
|A_1|+|A_2|=n,~~ |A_1|,|A_2|\in
\left[\frac16\,n,\frac56\,n\right]\right\}, $$ where $|A|$ denotes
the length of the sequence $A\in T_n$.

It will be convenient to also use the distance $d_1$ on $W_n$
which is 2-equivalent with $d_\infty$.
\begin{equation*}
d_1((A_1, A_2),(B_1,B_2))\DEF
d_T(A_1,B_1)+d_T(A_2,B_2).
\end{equation*}

We will show that there exist bijections $\vf_{1,n}$ from $\{(\PP_{1,n},d_\infty)\}_n$ onto
$\{(W_n,d_\infty)\}_n$ which have uniformly bounded distortions.

We define a  map $\vf_{1,n}: \PP_{1,n}\to W_n$ as follows: for any
$(x,k)\in  \PP_{1,n}$, the element  $\vf_{1,n}(x,k)\DEF(A_1,A_2)$,
where $A_1=(a_{1,0},\dots,a_{1,k-1})$ is a sequence of length $k$
and $A_2=(a_{2,1},\dots,a_{2,n-k})$ is a sequence of length $n-k$,
defined by
\begin{equation*}
 a_{1,j}=\begin{cases} 1\ \ \ {\text{ if\ }} j\in x,\\
 0\ \ \ {\text{ if\ }} j\notin x,
 \end{cases}\ \ \ \ \
 a_{2,i}=\begin{cases} 1\ \ \ {\text{ if\ }} n-i\in x,\\
 0\ \ \ {\text{ if\ }} n-i\notin x.
 \end{cases}
 \end{equation*}

It is clear that the map $\vf_{1,n}$ is one-to-one and onto. We
will show that $\vf_{1,n}$ is a bilipschitz isomorphism of
$(\PP_{1,n},\rho)$ and $(W_n,d_\infty)$.

For any $A,B\in T_n$, we denote by $\lgc (A,B)$ the length  of the
greatest common ancestor of $A$ and $B$ in $T_n$. In this
notation,
$$d_T(A,B)=(|A|-\lgc (A,B)) + (|B|-\lgc (A,B)),$$
and for $(A_1, A_2),(B_1,B_2)\in W_n$,
\begin{equation}\label{dW}
 \begin{split}
 d_1((A_1, A_2),(B_1,B_2))&=d_T(A_1, B_1)+d_T(A_2,B_2)\\&= 2(n-(\lgc (A_1,B_1)+\lgc (A_2,B_2))).
 \end{split}
 \end{equation}

 In particular, for all  $(A_1, A_2),(B_1,B_2)\in W_n$ we have
 \begin{equation}\label{diamW}
 \begin{split}
 d_1((A_1, A_2),(B_1,B_2))&\le 2n.
 \end{split}
 \end{equation}

To continue we need to estimate the distance $\rho((x,k),(y,l))$,
where $(x,k),(y,l)\in\gn$ and $\rho$ is the distance in the Cayley
graph of $\gn$ with the generating set $\{t,ta\}$. We use the
following observation: to get from $(x,k)$ to $(y,l)$ we need

\begin{itemize}

\item to  traverse at least one of the two paths from $k$ to $l$ on
the $n$-cycle (the graph of $\mathbb{Z}_n$ with respect to the
generating set $\{\pm 1\}$);

\item to visit all positions $j\in \mathbb{Z}_n$ which are not on the selected  path, but which
belong to $x\triangle y$, and to change the state of  all lamps at these positions.

\end{itemize}

Denote by $p_1$ and $p_2$ the lengths of the two distinct paths
from $k$ to $l$ on $\mathbb{Z}_n$, and by $g_1$ and $g_2$ -- the
sizes of the largest ``gaps'' in these paths, that is, the largest
distances between distinct vertices for which there is no element
of $x\triangle y$ in between  (observe that $g_1$ and $g_2$ are at
least $1$ each). With this notation it is easy to see the validity
of the leftmost inequality in
\begin{equation}\begin{split}\label{E:Rho}
\min\{p_1+2(p_2-g_2),~& p_2+2(p_1-g_1)\}\le
\rho((x,k),(y,l))\\&\le\min\{p_1+2(p_2-g_2),~ p_2+2(p_1-g_1)\}+2.
\end{split}
\end{equation}

The rightmost inequality in \eqref{E:Rho} holds  because
discrepancies with the equality in \eqref{E:Rho} can occur only at
one of the endpoints of the `interval' on $\mathbb{Z}_n$
consisting of all vertices that are visited by an optimal tour
from $k$ to $l$ that establishes the distance between $(x,k)$ and
$(y,l)$. One of the cases when the distance exceeds the minimum by
$2$ is the following: the distance from $k$ to $l$ in the positive
direction is significantly smaller than in the negative direction
and $x\triangle y=\{k, l\}$. In this case, if we start at $k$ we
need to   do one step in the negative direction in order to change
the status of the lamp at $k$, and then head back in the positive
direction. Note that the position $l$ is reached by the step $ta$
in order to change the status of the lamp there, so no additional
steps are needed at this endpoint.

For the rest of the proof we fix $(x,k), (y,l)\in \PP_{1,n}$, and
$(A_1, A_2)=\vf_{1,n}(x,k)$, $(B_1, B_2)=\vf_{1,n}(y,l)$ in $W_n$.

\begin{observation}\label{O:CrucObs} The sum $\lgc (A_1,B_1)+\lgc
(A_2,B_2)$ is equal to the number of vertices of the set $E$ constructed as
a union of two, possibly empty, `intervals' in $\bbZ_n$. One of the intervals starts at $0\in
\mathbb{Z}_n$, goes in the `positive' direction and ends at the
first vertex  which belongs to $ x\triangle y \cup
\{k, l\}$,
excluding this vertex, in particular if $0\in x\triangle y\cup  \{k, l\}$, then the interval is empty. The other interval starts at $(n-1)\in
\mathbb{Z}_n$, goes in the `negative' direction and ends at the
first vertex which is in $x\triangle y\cup \{k, l\}$,
this interval includes its end if and only if it  does not belong to $x\triangle y$.
Since the number of vertices in the  interval $E$ is equal to either its length $|E|$, if $E$ is empty, or to $|E|+1$, otherwise, we obtain
\begin{equation}\lb{Eobs}
|E|\le \lgc (A_1,B_1)+\lgc
(A_2,B_2)\le |E|+1.
\end{equation}
\end{observation}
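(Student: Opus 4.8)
The plan is to read each greatest common ancestor directly off the definition of $\vf_{1,n}$ and to match it, vertex for vertex, with one of the two constituent intervals of $E$. Recall that for two elements of $T_n$ the quantity $\lgc(A_1,B_1)$ is the length of their longest common prefix. Since $A_1=(a_{1,0},\dots,a_{1,k-1})$ and $B_1=(b_{1,0},\dots,b_{1,l-1})$ record the indicators of $x$ and $y$ at the positions $0,1,2,\dots$ read in the positive direction, we have $a_{1,j}=b_{1,j}$ exactly when $j\notin x\triangle y$, and the comparison terminates at the first index that either lies in $x\triangle y$ or equals the length of the shorter of the two sequences, namely $\min\{k,l\}$ (as $|A_1|=k$ and $|B_1|=l$). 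First I would note that the first vertex of $x\triangle y\cup\{k,l\}$ met going positively from $0$ is either the earliest element of $x\triangle y$ encountered or, if none precedes it, the position $\min\{k,l\}$. In either case $\lgc(A_1,B_1)$ equals the number of vertices strictly preceding this vertex, i.e.\ the number of vertices of the first interval, with its endpoint always excluded (position $\min\{k,l\}$ is simply not a coordinate of the shorter sequence, so it can never be matched).

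Next I would run the same argument for $\lgc(A_2,B_2)$, now reading positions $n-1,n-2,\dots$ in the negative direction, with $a_{2,i}=b_{2,i}$ iff $n-i\notin x\triangle y$. The only, crucial, difference is the bookkeeping about where the shorter sequence ends: since $|A_2|=n-k$ and $|B_2|=n-l$, the shorter sequence terminates at the coordinate corresponding to position $\max\{k,l\}$. Unlike position $\min\{k,l\}$ for the first block, position $\max\{k,l\}$ \emph{is} a coordinate of both $A_2$ and $B_2$ (the second block comprises positions $k,k+1,\dots,n-1$), hence it is actually compared; consequently the common prefix extends to include it precisely when $\max\{k,l\}\notin x\triangle y$. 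This is exactly the stated endpoint rule for the second interval, and it yields that $\lgc(A_2,B_2)$ equals its number of vertices. I expect this asymmetry between the two blocks to be the only real subtlety of the argument, so I would isolate it as the single point to verify with care.

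It remains to assemble the two intervals. Since $\lgc(A_1,B_1)\le\min\{k,l\}$ and $\lgc(A_2,B_2)\le n-\max\{k,l\}$, the first interval is contained in $\{0,\dots,\min\{k,l\}-1\}$ and the second in $\{\max\{k,l\},\dots,n-1\}$; these are disjoint, so the number of vertices of $E$ is the sum of the two vertex counts, which is $\lgc(A_1,B_1)+\lgc(A_2,B_2)$, proving the first assertion. Finally, because the first interval begins at $0$ and the second at $n-1$, and $0$ and $n-1$ are adjacent in the cycle $\mathbb{Z}_n$, the union $E$ is a single arc whenever $(x,k)\neq(y,l)$ (the degenerate case $E=\mathbb{Z}_n$ occurs only when $(x,k)=(y,l)$, in which case both sides of \eqref{Eobs} equal $n$). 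Thus a non-empty $E$ has exactly one more vertex than its length $|E|$, while an empty $E$ has $0=|E|$ vertices, and combining this with the equality just proved gives \eqref{Eobs}.
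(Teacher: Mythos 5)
Your proposal is correct and takes essentially the same route as the paper's (implicit) argument: unpacking the definition of $\vf_{1,n}$, identifying each of $\lgc(A_1,B_1)$ and $\lgc(A_2,B_2)$ with the vertex count of the corresponding interval --- including the key endpoint asymmetry that $\min\{k,l\}$ is never a coordinate of the shorter first sequence (so it is always excluded) while $\max\{k,l\}$ is a coordinate of both second sequences (so it is included iff it lies outside $x\triangle y$) --- and summing over the two disjoint intervals. Your explicit treatment of the degenerate case $E=\mathbb{Z}_n$, which occurs only when $(x,k)=(y,l)$ and is glossed over in the paper, is a harmless refinement rather than a departure.
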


Note that   \eqref{dW} and \eqref{Eobs} immediately imply that
\[d_1(\vf_{1,n}(x,k),\vf_{1,n}(y,l))\ge 2(n-g),\] where $g$ is
the number of  vertices in the largest `interval' $I$ in
$\mathbb{Z}_n$ which is disjoint with $\{k,l\}\cup x\triangle y$. Since it is clear that there exists a `tour' of the lamplighter that travels from $k$ to $l$, passes through every vertex of the difference $x\triangle y$,
 visits each vertex at most twice, and  stays outside $I$ except
possibly one vertex (see the paragraph after \eqref{E:Rho}),   for
all pairs $(x,k), (y,l)\in \PP_{1,n}$ we get
the inequality
\begin{equation*}
\begin{split}
\rho((x,k),(y,l))&\le 2+2(n-g)\le 2+d_1(\vf_{1,n}(x,k),\vf_{1,n}(y,l))\\
&\le
2+2d_\infty(\vf_{1,n}(x,k),\vf_{1,n}(y,l)).
\end{split}
\end{equation*}

To get the inequality in the other direction we observe that
$\rho((x,k),(y,l))$ is at least   $\tau-1$, where $\tau$ is the
number of vertices in the smallest `interval' $J$ on
 $\mathbb{Z}_n$ that contains all elements of  $ \{k,l\}\cup x\triangle y$. Note that
 $J$ is nonempty, and thus $\tau=|J|+1$.

If the interval $J$ does not contain $0$, then $J$ and  the interval $E$ constructed in Observation~\ref{O:CrucObs} have at most one point in common,
and the union $J\cup E$ covers $\bbZ_n$. Thus the
length of $J$ is at least $n-|E|-1$.
In this case, by \eqref{dW} and \eqref{Eobs}, we get
\[\begin{split}\rho((x,k),(y,l))&\ge\tau-1\ge n-|E|-1\\
&\ge
\frac12(d_1(\vf_{1,n}(x,k),\vf_{1,n}(y,l)))-1\\
&\ge
\frac12(d_\infty(\vf_{1,n}(x,k),\vf_{1,n}(y,l)))-1.
\end{split}\]

If the interval $J$ contains $0$, then it contains at least
$\frac13n-1$ vertices, because by the definition of $\PP_{1,n}$ we have
$k,l\in P_1=\left[\frac16\,n,\frac56\,n\right]$. So in this case, by \eqref{diamW}, we get
\[\begin{split}\rho((x,k),(y,l))&\ge\frac13n-2\\&\ge \frac16\max\{d_1((A_1,
A_2),(B_1,B_2)): (A_1, A_2),(B_1,B_2)\in W_n\}-2\\&\ge
\frac1{6}(d_\infty(\vf_{1,n}(x,k),\vf_{1,n}(y,l)))-2.\end{split}\]

This ends the proof  that the bijections
$\vf_{1,n}:(\PP_{1,n},d_\infty)\to (W_n,d_\infty)$ have uniformly bounded distortions.

Since, by \eqref{isoP}, the  spaces $\PP_{2,n}$ and $\PP_{3,n}$
 are isometric to $\PP_{1,n}$, the maps
$ \vf_{2,n}\DEF\vf_{1,n}\circ\overline{\tht_n}$, and $ \vf_{3,n}\DEF\vf_{1,n}\circ\overline{\tht_n}^{\, 2}$ are  bijections from $\PP_{2,n}$ and $\PP_{3,n}$, respectively, onto
$(W_n,d_\infty)$,  and they have the same Lipschitz and co-Lipschitz constants as the maps $\vf_{1,n}$.

\subsection{Step 2}\lb{step2}

We will apply the well-known theory of Lipschitz retracts.

For basic theory of Lipschitz retracts in metric spaces see e.g. \cite[Chapter
1]{BL00} and \cite[Propositions 2.1, 2.2, and the comment at the
top of page 303]{Lan13}. For the convenience
of the reader, we briefly recall the definitions and results that we use.

A metric space $M$ is called {\it   injective}
 if for every metric space
$B$, every $A\subseteq B$, and every Lipschitz function $f:A\to
M$, there exists a Lipschitz extension of $f$, that is, a function
$\bar{f}:B\to M$, so that $\bar{f}|_A=f$ and
$\lip(\bar{f})=\lip(f)$.

A metric space $M$ is called a {\it $\lambda$-absolute
Lipschitz retract} (where $1\le\lambda<\infty$) provided that
whenever $X$ is isometrically contained in a metric space $Y$,
there exists a retraction, $r$, from $Y$ onto $X$, with
$\lip(r)\le\lambda$.

\begin{theorem}[{\cite[Proposition 2.2]{Lan13}}]
A metric space $M$ is   injective \wtw\ it is an absolute $1$-Lipschitz retract.
\end{theorem}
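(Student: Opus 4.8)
The plan is to prove both implications directly from the two definitions recalled above, normalizing Lipschitz constants where it helps.

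For the forward implication, suppose $M$ is injective and let $M$ be isometrically contained in a metric space $Y$. I would apply injectivity to the identity map $\mathrm{id}_M : M \to M$, regarded as a Lipschitz map on the subset $A = M$ of $B = Y$; note $\lip(\mathrm{id}_M) = 1$. Injectivity then yields an extension $r : Y \to M$ with $r|_M = \mathrm{id}_M$ and $\lip(r) = \lip(\mathrm{id}_M) = 1$. Since $r$ fixes $M$ pointwise it is a retraction of $Y$ onto $M$, and $\lip(r) = 1 \le 1$; hence $M$ is an absolute $1$-Lipschitz retract.

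For the reverse implication, suppose $M$ is an absolute $1$-Lipschitz retract, and let $A \subseteq B$ with $f : A \to M$ Lipschitz. After replacing the metric of $B$ by $\lip(f)$ times itself I may assume $\lip(f) \le 1$, since a $1$-Lipschitz extension for the rescaled metric is exactly a $\lip(f)$-Lipschitz extension for the original one. The idea is to glue $M$ to $B$ along $f$ and then retract. On the disjoint union $B \sqcup M$ I would take the largest pseudometric $d$ dominated by $d_B$ on $B$, by $d_M$ on $M$, and across the two pieces by the values
\[ d(b,m) = \inf_{a \in A}\bigl(d_B(b,a) + d_M(f(a),m)\bigr), \qquad b \in B,\ m \in M, \]
and then pass to the metric quotient $Z$ obtained by identifying points at distance zero. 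Three facts are then needed. First, the inclusion $M \hookrightarrow Z$ is isometric: for $m,m' \in M$ any competing chain through $B$ has length at least $d_M(m,f(a)) + d_M(f(a),f(a')) + d_M(f(a'),m')$, where I have used $d_B(a,a') \ge d_M(f(a),f(a'))$ — this is precisely where $\lip(f) \le 1$ enters — and the triangle inequality in $M$ bounds this below by $d_M(m,m')$. Second, the natural map $\iota_B : B \to Z$ is $1$-Lipschitz, since $d(b,b') \le d_B(b,b')$. Third, each $a \in A$ satisfies $d(a,f(a)) = 0$ (take $a' = a$ in the infimum), so $a$ and $f(a)$ have the same image in $Z$.

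Since $M$ sits isometrically inside $Z$, the absolute $1$-Lipschitz retract hypothesis supplies a retraction $r : Z \to M$ with $\lip(r) \le 1$. I would set $\bar f \DEF r \circ \iota_B : B \to M$, so that $\lip(\bar f) \le \lip(r)\,\lip(\iota_B) \le 1$; and for $a \in A$ the identification of $a$ with $f(a)$ together with $r|_M = \mathrm{id}_M$ gives $\bar f(a) = r(f(a)) = f(a)$, i.e. $\bar f|_A = f$. As any extension satisfies $\lip(\bar f) \ge \lip(f)$, we conclude $\lip(\bar f) = \lip(f)$, and undoing the rescaling proves $M$ injective. The main obstacle is the verification that the amalgam $Z$ contains $M$ isometrically: the entire argument hinges on the normalization $\lip(f) \le 1$, and without it the gluing could shorten distances inside $M$ and the retraction would no longer extend $f$ without increasing its Lipschitz constant.
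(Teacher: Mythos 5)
Your proposal is correct, but note that the paper does not actually prove this statement: it quotes it from \cite[Proposition 2.2]{Lan13}, so the only ``paper proof'' is a citation. What you have written is, in essence, the classical argument behind Lang's proposition (going back to Aronszajn--Panitchpakdi; see also \cite[Chapter 1]{BL00}): the forward implication by extending $\mathrm{id}_M$ along the isometric inclusion $M\subseteq Y$, and the reverse implication by forming the $1$-Lipschitz amalgam $Z=B\sqcup_f M$ (the largest pseudometric dominated by $d_B$, $d_M$, and the cross-distances $\inf_{a\in A}(d_B(b,a)+d_M(f(a),m))$, followed by the metric quotient) and then composing $\iota_B$ with a $1$-Lipschitz retraction $Z\to M$. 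You correctly isolate the two points where care is needed: the normalization $\lip(f)\le 1$ is exactly what makes $M\hookrightarrow Z$ isometric, and $\iota_B$ need only be $1$-Lipschitz, not isometric (indeed chains through $M$ can genuinely shorten $B$-distances when $f$ collapses points, which is why the ``largest pseudometric'' formulation rather than the naive piecewise formula is the right one --- your phrasing already accounts for this).

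Two cosmetic points you may want to make explicit. First, your verification that $M$ sits isometrically in $Z$ treats a chain with a single excursion through $B$; a general chain may cross between the two pieces several times, but the same estimate applies excursion by excursion (each $B$-segment from $a$ to $a'$ has length at least $d_B(a,a')\ge d_M(f(a),f(a'))$, so the whole chain projects to a chain in $M$ of no greater length), so this is a one-line induction, not a gap. Second, the rescaling degenerates when $\lip(f)=0$, i.e.\ when $f$ is constant (or $A=\emptyset$), but then a constant extension works trivially. With these remarks your argument is a complete and self-contained proof of the cited proposition.
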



A metric space $M$ is said to have the {\it binary
intersection property} if every collection of mutually
intersecting closed balls in $M$ has a common point.

 A metric space $M$ is said to be {\it
metrically convex} if for every $x_0,x_1\in M$ and for every
$0<t<1$ there is a point $x_t\in M$ such that
$d(x_0,x_t)=td(x_0,x_1)$ and $d(x_1,x_t) = (1-t)d(x_0,x_1)$.

\begin{proposition}[{\cite[Proposition 1.4]{BL00}}] A metric space
$M$ is an absolute $1$-Lipschitz retract if and only it is
metrically convex and has the binary intersection property.
\end{proposition}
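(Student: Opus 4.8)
The plan is to use the Theorem quoted just above to replace ``absolute $1$-Lipschitz retract'' by the equivalent notion of \emph{injectivity}, and then to prove the two implications separately through a convenient intermediate property, \emph{hyperconvexity}: every family of closed balls $\{B(x_i,r_i)\}_{i\in I}$ whose centers and radii satisfy $d(x_i,x_j)\le r_i+r_j$ for all $i,j$ has nonempty intersection. I would first record the elementary bridging lemma that in a metrically convex space two closed balls $B(a,r)$ and $B(b,s)$ meet \wtw\ $d(a,b)\le r+s$; the nontrivial direction is obtained, when $r,s<d(a,b)=:d$, by taking the metric-convexity point $x_t$ with $t=r/d$, which satisfies $d(a,x_t)=r$ and $d(b,x_t)=d-r\le s$, hence lies in both balls. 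With this lemma in hand, ``metrically convex $+$ binary intersection property'' is exactly equivalent to hyperconvexity, so it suffices to prove that injectivity is equivalent to hyperconvexity.

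For the implication injective $\Rightarrow$ hyperconvex I would use a single ``adjoin a point and retract'' construction. Given a family with $d(x_i,x_j)\le r_i+r_j$, form $M\cup\{p\}$ by adjoining one new point $p$ and setting $d(p,m)=\inf_i\bigl(r_i+d(m,x_i)\bigr)$. The hypothesis $d(x_i,x_j)\le r_i+r_j$ is precisely what is needed to verify the triangle inequalities, so this is a metric extension in which $M$ sits isometrically (distances inside $M$ are untouched) and $d(p,x_i)\le r_i$ for every $i$. Since $M$ is an absolute $1$-Lipschitz retract, there is a retraction $\varrho\colon M\cup\{p\}\to M$ with $\lip(\varrho)\le 1$; the point $z:=\varrho(p)$ then satisfies $d(z,x_i)=d(\varrho(p),\varrho(x_i))\le d(p,x_i)\le r_i$, so $z\in\bigcap_i B(x_i,r_i)$. (The only technical point is positivity of $d(p,\cdot)$ when some $r_i=0$, which is handled directly since then the common point is forced onto $x_i$.) Specializing to two-ball families recovers both metric convexity and the binary intersection property.

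For the converse, hyperconvex $\Rightarrow$ injective, I would argue by a Zorn's lemma extension. Let $f\colon A\to M$ be $1$-Lipschitz with $A\subseteq B$, and consider the poset of $1$-Lipschitz extensions $(A',f')$ with $A\subseteq A'\subseteq B$, ordered by extension; unions of chains are upper bounds, so there is a maximal element $(A^*,f^*)$. The heart of the argument---and the step I expect to be the main obstacle---is the one-point extension: given $b\in B\setminus A^*$, I must find $z\in M$ with $d(z,f^*(a))\le d(b,a)$ for all $a\in A^*$, that is, a point in $\bigcap_{a\in A^*}B\bigl(f^*(a),d(b,a)\bigr)$. Here the bridging lemma is essential: for $a,a'\in A^*$ the $1$-Lipschitz property gives $d(f^*(a),f^*(a'))\le d(a,a')\le d(a,b)+d(b,a')$, and metric convexity converts this radius-sum inequality into an actual intersection of the two balls; thus the family is mutually intersecting, and the binary intersection property supplies the common point $z$. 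Extending $f^*$ by $f^*(b):=z$ keeps it $1$-Lipschitz, contradicting maximality unless $A^*=B$. Hence $f$ extends to all of $B$, which proves injectivity and completes the equivalence.
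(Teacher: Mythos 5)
Your argument is correct in substance, and it is worth noting that the paper itself gives no proof of this proposition at all --- it is quoted verbatim from \cite[Proposition~1.4]{BL00} --- so the right comparison is with the classical proof in the cited source. What you have written is essentially the Aronszajn--Panitchpakdi argument. Your second half (Zorn's lemma, one-point extension, pairwise intersection of the balls $B(f^*(a),d(b,a))$ via the bridging lemma, then the binary intersection property) is exactly the proof of the hard direction in \cite{BL00}, except that they extend the identity map (retraction) while you extend arbitrary $1$-Lipschitz maps, which is equivalent by the theorem quoted just before the proposition. Your first half differs: Benyamini and Lindenstrauss deduce metric convexity and the binary intersection property for a $1$-absolute Lipschitz retract $M$ by embedding $M$ isometrically into a suitable $\ell_\infty(\Gamma)$, which has both properties, and pushing segments and intersection points down through the $1$-Lipschitz retraction; you instead adjoin a single new point $p$ to $M$ at prescribed distances and retract it back. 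Both are standard, and yours is the more self-contained, since it avoids the universal embedding.

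Two small patches are needed. First, in the step ``injective $\Rightarrow$ hyperconvex'' you address positivity of $d(p,\cdot)$ only when some $r_i=0$, but the prescribed function $d(p,m)=\inf_i\bigl(r_i+d(m,x_i)\bigr)$ can also vanish when all $r_i>0$ and $\inf_i r_i=0$, in which case $M\cup\{p\}$ carries only a pseudometric and the phrase ``metric extension'' fails as stated. The fix is one line: if $d(p,m)=0$ for some $m\in M$, then for every $j$ and every $\varepsilon>0$ there is $i$ with $r_i+d(m,x_i)<\varepsilon$, whence
\[
d(m,x_j)\le d(m,x_i)+d(x_i,x_j)\le d(m,x_i)+r_i+r_j<\varepsilon+r_j,
\]
so $m$ itself lies in $\bigcap_j B(x_j,r_j)$ and there is nothing to prove; otherwise $d$ is a genuine metric and your retraction argument applies. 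Second, the paper's definition of injectivity requires extending \emph{arbitrary} Lipschitz maps with the same constant, while your Zorn argument treats only $1$-Lipschitz maps; this is harmless (rescale the metric on $B$ by the factor $\lip(f)$, the case $\lip(f)=0$ being trivial), but it should be said. With these two remarks inserted, the proof is complete and correct.
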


A
metric space $M$ is called a {\it metric tree} if it is complete,
metrically convex, and for any pair of vertices there is a unique
continuous curve joining them in $M$. Given a tree $T$ in Graph
Theory sense, one can construct the corresponding metric tree by
attaching between any two adjacent vertices mutually disjoint curves isometric to
the interval $(0,1)$.

\begin{proposition}[{Corollary of \cite[Lemma 2.1]{JLPS02}}] A metric binary
tree of any height  is a $1$-absolute
Lipschitz retract.
\end{proposition}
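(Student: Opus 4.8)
The plan is to deduce the proposition from the two characterizations recalled just above. By the cited \cite[Proposition 1.4]{BL00}, a metric space is an absolute $1$-Lipschitz retract if and only if it is metrically convex and has the binary intersection property. Metric convexity is part of the very definition of a metric tree, so the whole problem collapses to a single point: verifying that a metric binary tree $T$ has the binary intersection property. This is the step I expect to be the crux; the rest is bookkeeping about the tree structure.

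First I would record the geometric facts about $T$ that make the argument run. Since $T$ is a metric tree, any two points are joined by a unique geodesic segment, and along any such segment the function $m\mapsto d(x,m)$ is convex and attains its maximum at an endpoint. Consequently every closed ball $\bar B(x,r)$ is geodesically convex: for $p,q\in \bar B(x,r)$ the whole segment $[p,q]$ lies in $\bar B(x,r)$, and intersections of convex sets are again convex. I would also use the tree's median structure: for any three points $u,v,w$ there is a unique point $m=m(u,v,w)$ lying simultaneously on the three segments $[u,v]$, $[u,w]$ and $[v,w]$.

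With these in hand, the binary intersection property for a family $\{\bar B(x_\alpha,r_\alpha)\}_{\alpha\in A}$ of mutually intersecting closed balls is proved in two stages. The finite stage is a Helly-type statement: pairwise intersecting convex subsets of $T$ have a common point (Helly number $2$). For three convex sets $C_1,C_2,C_3$ I would pick $p_{ij}\in C_i\cap C_j$ and take the median $m=m(p_{12},p_{13},p_{23})$; since $m$ lies on $[p_{12},p_{13}]\subseteq C_1$, on $[p_{12},p_{23}]\subseteq C_2$ and on $[p_{13},p_{23}]\subseteq C_3$, it is common to all three. The general finite case follows by induction on the number $n$ of sets: replacing $C_1,\dots,C_{n-1}$ by $D_i=C_i\cap C_n$, the $D_i$ are convex and pairwise intersecting, since $D_i\cap D_j=C_i\cap C_j\cap C_n\neq\emptyset$ by the three-set case, so the inductive hypothesis applied to the $n-1$ sets $D_i$ produces a point common to all the $C_\alpha$. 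This establishes the finite intersection property for the family of closed balls.

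It then remains to pass from finite subfamilies to the whole family. When $T$ has finite height it is the metric realization of a finite graph, hence compact, and its closed balls are compact sets; the finite intersection property then forces $\bigcap_{\alpha}\bar B(x_\alpha,r_\alpha)\neq\emptyset$, which is exactly the binary intersection property. For a tree of arbitrary height the same conclusion holds with completeness of $T$ replacing compactness, which is the content of the cited \cite[Lemma 2.1]{JLPS02} (equivalently, the hyperconvexity of $\mathbb{R}$-trees), but only the finite-height case is needed for the embeddings constructed in this paper. The one genuine obstacle is the Helly step; everything else is routine once the geodesic and median structure of $T$ is made explicit.
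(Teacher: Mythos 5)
Your proposal is correct, but it is a genuinely different route from the paper's: the paper offers no argument at all for this proposition, deriving it purely as a citation of \cite[Lemma~2.1]{JLPS02} (which establishes that metric trees are $1$-absolute Lipschitz retracts, equivalently hyperconvex), whereas you verify the \cite[Proposition~1.4]{BL00} criterion directly. Your two ingredients are sound: metric convexity is indeed built into the paper's definition of a metric tree, and your verification of the binary intersection property works — closed balls are geodesically convex because $t\mapsto d(x,\gamma(t))$ along a geodesic is $V$-shaped (slopes $\pm 1$), hence convex with maximum at an endpoint; the median argument correctly gives Helly number $2$ for convex subsets (the median $m(p_{12},p_{13},p_{23})$ lies on $[p_{12},p_{13}]\subseteq C_1$, etc.), and the induction via $D_i=C_i\cap C_n$ is the standard Helly bootstrap; finally, a metric binary tree of finite height is the metric realization of a finite graph, hence compact, so the finite intersection property upgrades to arbitrary families. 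What your approach buys is a self-contained, elementary proof for exactly the objects the paper uses (the depth-$n$ trees inside $\Pi_n$), making the convexity and median structure explicit; what the citation buys is the full statement for complete metric trees of arbitrary (including infinite) height in one stroke. The only soft spot is that for infinite height you defer to completeness/hyperconvexity with a citation rather than an argument — but since you flag this explicitly and the paper's own treatment of the general case is likewise pure citation, this is not a gap relative to the paper.
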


\begin{observation}
The binary
intersection property and metric convexity are preserved under
$\ell_\infty$-sums.
\end{observation}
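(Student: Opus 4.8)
The plan is to prove the Observation by treating each of the two preservation properties separately, reducing everything to the definition of the $\ell_\infty$-metric on a product $M=\bigoplus_\infty^{m} M_j$ of metric spaces $M_1,\dots,M_m$, each of which is metrically convex and has the binary intersection property. Throughout I will use that a ball in $M$ decomposes coordinatewise: for $x=(x_1,\dots,x_m)$ and radius $r\ge 0$,
\[
B(x,r)=\prod_{j=1}^m B(x_j,r),
\]
because $d_\infty(x,y)\le r$ holds if and only if $d(x_j,y_j)\le r$ for every $j$. This single observation is the engine behind both parts.

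First I would handle metric convexity. Given $x^0=(x^0_j)_j$ and $x^1=(x^1_j)_j$ in $M$ and $t\in(0,1)$, I would invoke metric convexity in each coordinate $M_j$ to produce a point $z_j$ with $d(x^0_j,z_j)=t\,d(x^0_j,x^1_j)$ and $d(z_j,x^1_j)=(1-t)\,d(x^0_j,x^1_j)$, and then set $x^t=(z_j)_j$. Taking the maximum over $j$ of both families of equalities, and using that $\max_j t\,d(x^0_j,x^1_j)=t\max_j d(x^0_j,x^1_j)=t\,d_\infty(x^0,x^1)$ (and likewise with $1-t$), gives $d_\infty(x^0,x^t)=t\,d_\infty(x^0,x^1)$ and $d_\infty(x^t,x^1)=(1-t)\,d_\infty(x^0,x^1)$, as required. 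The only point to notice here is that the scalar $t$ factors out of the maximum precisely because it is the same in every coordinate.

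Second I would treat the binary intersection property, which is the more substantive of the two. Let $\{B(x^\alpha,r_\alpha)\}_{\alpha\in A}$ be a collection of pairwise intersecting closed balls in $M$. Pairwise intersection means that for all $\alpha,\beta$ we have $d_\infty(x^\alpha,x^\beta)\le r_\alpha+r_\beta$, which by the coordinatewise formula for $d_\infty$ is equivalent to $d(x^\alpha_j,x^\beta_j)\le r_\alpha+r_\beta$ for every coordinate $j$. Hence for each fixed $j$ the balls $\{B(x^\alpha_j,r_\alpha)\}_{\alpha\in A}$ in $M_j$ are pairwise intersecting, so by the binary intersection property of $M_j$ they share a common point $p_j\in\bigcap_\alpha B(x^\alpha_j,r_\alpha)$. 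Setting $p=(p_j)_j$ and using the coordinatewise decomposition of balls in $M$, I get $p\in\bigcap_\alpha B(x^\alpha,r_\alpha)$, establishing the common point in $M$.

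I expect the main (though still mild) obstacle to be purely bookkeeping: keeping clear the logical equivalence between a statement about the single $\ell_\infty$-distance and the family of coordinate distances, and ensuring that the common point is chosen independently in each coordinate (which is legitimate since the coordinate choices are unconstrained relative to one another). No limiting or completeness argument is needed, since the binary intersection property as stated quantifies over arbitrary mutually intersecting families and the product of the coordinatewise common points is manufactured directly. The argument applies verbatim to the $\ell_\infty$-sum of any finite number of factors, which is all that is needed for the six-tree sum appearing in the proof of Theorem~\ref{T:FinWrNonSR}.
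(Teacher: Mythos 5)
Your overall strategy coincides with the paper's: both properties are handled coordinatewise, using the product decomposition $B(x,r)=\prod_j B(x_j,r)$ of closed balls in the $\ell_\infty$-sum, and your metric-convexity argument is exactly the paper's. However, your treatment of the binary intersection property has a genuine flaw at one step. You pass from pairwise intersection of the balls in the sum to the inequalities $d(x^\alpha_j,x^\beta_j)\le r_\alpha+r_\beta$, and then assert that \emph{hence} the coordinate balls $\{B(x^\alpha_j,r_\alpha)\}_{\alpha}$ in $M_j$ are pairwise intersecting. That last inference --- from $d(u,v)\le r+s$ to $B(u,r)\cap B(v,s)\ne\emptyset$ --- is false in a general metric space: take $M_j=\{0,1\}$ with $d(0,1)=1$ (a two-point space, which does have the binary intersection property); then $d(0,1)\le \tfrac12+\tfrac12$, while $B(0,\tfrac12)=\{0\}$ and $B(1,\tfrac12)=\{1\}$ are disjoint. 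The inference is valid when $M_j$ is metrically convex (take $t=r_\alpha/(r_\alpha+r_\beta)$ and use a convexity point), so your argument does prove the statement for $\ell_\infty$-sums of spaces that are \emph{both} metrically convex and have the binary intersection property --- which covers the metric trees to which the Observation is applied --- but it does not prove the Observation as stated, namely that the binary intersection property by itself is preserved.

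The repair is immediate, and it is precisely what the paper does: instead of encoding intersection by a distance inequality, project an actual common point. If $z\in B(x^\alpha,r_\alpha)\cap B(x^\beta,r_\beta)$ in the sum, then $d_\infty(z,x^\alpha)\le r_\alpha$ forces $d(z_j,x^\alpha_j)\le r_\alpha$ for every $j$, and likewise for $\beta$; thus $z_j$ itself witnesses that the projected balls intersect in $M_j$, with no convexity hypothesis needed. With that one substitution, the rest of your argument --- choosing a common point $p_j$ independently in each coordinate and recombining via the product decomposition of balls --- is correct, including your remarks that the coordinate choices are unconstrained relative to one another and that no completeness or limiting argument is required.
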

\begin{proof}[Proof sketch]
{\it Binary intersection property}: If balls $\{B((x_i,y_i), r_i)\}_i$
in $M_1\oplus_\infty M_2$ are mutually intersecting, then the same
happens for projected balls $\{B((x_i, r_i)\}_i$ in $M_1$ and
$\{B((y_i, r_i)\}_i$ in $M_2$. Thus each of the projected
collections of balls has a nonempty intersection. Let $x$ and $y$
be some points in the intersections. Using the definition of the
$\ell_\infty$-sum we get that $\{B((x_i,y_i), r_i)\}_i$ are
Cartesian products of $\{B((x_i, r_i)\}_i$ and $\{B((y_i,
r_i)\}_i$. Thus $(x,y)$ is in the intersection of  $\{B((x_i,y_i),
r_i)\}_i$.

{\it Metric convexity}: Let $(x_0,y_0)$ and $(x_1,y_1)$ be two points in
the $\ell_\infty$-sum. Let $\{x_t\}$ be a suitable family for
$M_1$ and $\{y_t\}$ be a suitable family for $M_2$. Then
$\{(x_t,y_t)\}$ is a suitable family for the $\ell_\infty$-sum.
\end{proof}

\begin{corollary}
Let $\Pi_n$ be the $\ell_\infty$-sum of two metric
binary trees of depth $n$ each. Then $\Pi_n$ is a
$1$-absolute Lipschitz retract.
\end{corollary}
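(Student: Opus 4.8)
The plan is to obtain the statement as a direct formal consequence of the three results immediately preceding it, by chaining them in the right order rather than by any new construction. First I would apply the Proposition drawn from \cite[Lemma 2.1]{JLPS02} to each of the two factors: a metric binary tree of depth $n$ is a $1$-absolute Lipschitz retract. Then I would feed this into the characterization from \cite[Proposition 1.4]{BL00}, whose forward direction tells us that a $1$-absolute Lipschitz retract is necessarily metrically convex and has the binary intersection property. Hence each of the two metric binary trees of depth $n$ enjoys both of these two properties.

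Next I would invoke the Observation, which asserts precisely that metric convexity and the binary intersection property are both preserved under $\ell_\infty$-sums of two metric spaces. Applying it to the two factor trees shows at once that their $\ell_\infty$-sum $\Pi_n$ is metrically convex and has the binary intersection property. Finally I would use the converse direction of \cite[Proposition 1.4]{BL00}: a metric space that is metrically convex and has the binary intersection property is a $1$-absolute Lipschitz retract. Applied to $\Pi_n$, this yields exactly the desired conclusion.

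I do not expect a genuine obstacle here, since the statement is a bookkeeping corollary of the cited characterization together with the closure property recorded in the Observation. The only points that warrant a line of care are, first, that the $\ell_\infty$-sum in question is the two-fold sum to which the Observation directly applies (so a single application suffices), and second, that the objects being summed are the \emph{metric} binary trees—complete, metrically convex spaces obtained by filling in the edges of the graph trees—so that the $1$-absolute Lipschitz retract property supplied by \cite[Lemma 2.1]{JLPS02} is the correct input for both directions of \cite[Proposition 1.4]{BL00}. With these identifications in place the argument is a short syllogism and requires no further computation.
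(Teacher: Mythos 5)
Your proposal is correct and coincides with the paper's own (implicit) argument: the corollary is stated without proof precisely because it follows by chaining the three immediately preceding results --- the $1$-absolute Lipschitz retract property of metric binary trees from \cite[Lemma 2.1]{JLPS02}, both directions of the characterization in \cite[Proposition 1.4]{BL00}, and the Observation that metric convexity and the binary intersection property pass to $\ell_\infty$-sums --- in exactly the order you describe. Your two cautionary remarks (that a single application of the Observation suffices for the two-fold sum, and that the summands are the metric trees) are apt but raise no actual difficulty.
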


As an immediate consequence we obtain the objective of Step~2.

\begin{corollary}
 For $i=1,2,3$, let $\vf_{i,n}:\PP_{i,n}\to W_{n}$ be the maps defined in Step~1.
Since $W_{n}\subseteq T_n\oplus_\infty T_n \subseteq \Pi_n$,
   there exist
Lipschitz extensions of $\vf_{i,n}$ to maps
$\bar{\vf}_{i,n}:\gn\to\Pi_n$ which have uniformly bounded Lipschitz
constants.
\end{corollary}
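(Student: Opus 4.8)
The plan is to obtain the statement as a direct application of the injectivity of $\Pi_n$, which the preceding corollary supplies. The essential observation is that $\PP_{i,n}$ carries the metric $\rho$ inherited from $\gn$, so $\PP_{i,n}$ sits isometrically inside $\gn$; extending $\vf_{i,n}$ from $\PP_{i,n}$ to all of $\gn$ is therefore precisely an instance of the Lipschitz extension problem with target $\Pi_n$, and the whole content has already been packaged into the structural results about absolute Lipschitz retracts.

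First I would record that each $\vf_{i,n}$ may legitimately be viewed as a Lipschitz map \emph{into $\Pi_n$}. Since the metric binary trees contain the graph trees $T_n$ isometrically, the chain of isometric inclusions $W_n\subseteq T_n\oplus_\infty T_n\subseteq\Pi_n$ shows that the metric $d_\infty$ on $W_n$ coincides with the restriction to $W_n$ of the metric of $\Pi_n$. Hence $\vf_{i,n}\colon(\PP_{i,n},\rho)\to\Pi_n$ is Lipschitz, and by the distortion estimates of Step~1 its Lipschitz constant $\lip(\vf_{i,n})$ is bounded by an absolute constant $L$, independent of $n$ and of $i\in\{1,2,3\}$; the maps $\vf_{2,n}$ and $\vf_{3,n}$ inherit the Lipschitz constant of $\vf_{1,n}$ via the isometries in \eqref{isoP}.

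The second step is to invoke injectivity. By the preceding corollary $\Pi_n$ is a $1$-absolute Lipschitz retract, and hence injective, since a metric space is injective if and only if it is an absolute $1$-Lipschitz retract (\cite[Proposition 2.2]{Lan13}). Applying the definition of injectivity with $B=\gn$, $A=\PP_{i,n}$, $M=\Pi_n$, and $f=\vf_{i,n}$, I obtain a Lipschitz extension $\bar{\vf}_{i,n}\colon\gn\to\Pi_n$ with $\lip(\bar{\vf}_{i,n})=\lip(\vf_{i,n})\le L$. Because injectivity preserves the Lipschitz constant \emph{exactly}, there is no multiplicative loss in passing from $\vf_{i,n}$ to $\bar{\vf}_{i,n}$, so the bound $L$ is uniform over $n$ and $i$, which is exactly the claimed uniform bound.

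I expect no genuine obstacle, as the hard work sits in the preceding corollary and in Step~1; the only points deserving care are bookkeeping. First, one must confirm that $W_n\hookrightarrow\Pi_n$ is isometric, so that the Step~1 bound for $\vf_{i,n}$ as a map into $(W_n,d_\infty)$ remains valid for $\vf_{i,n}$ as a map into $\Pi_n$. Second, one must check that the Step~1 estimates really yield a multiplicative Lipschitz constant rather than merely an affine one: this holds because distinct vertices of the Cayley graph are at $\rho$-distance at least $1$, so the additive constants appearing in the estimates around \eqref{E:Rho} are absorbed into a uniform multiplicative bound. With these two routine verifications in place, the extension theorem applies verbatim and delivers the corollary.
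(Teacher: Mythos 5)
Your proof is correct and takes essentially the same route as the paper: the preceding corollary makes $\Pi_n$ a $1$-absolute Lipschitz retract, hence injective by Lang's characterization, and applying injectivity with $A=\PP_{i,n}\subseteq B=\gn$, $f=\vf_{i,n}$ yields extensions with exactly the Lipschitz constants of the $\vf_{i,n}$, which Step~1 bounds uniformly in $n$ and $i$. Your two bookkeeping checks --- that $W_n\hookrightarrow\Pi_n$ is isometric, and that the additive constants in the Step~1 estimates around \eqref{E:Rho} are absorbed into a multiplicative bound because distinct vertices satisfy $\rho\ge 1$ --- are precisely the points the paper leaves implicit, and you handle both correctly.
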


\subsection{The Final Step of the proof of Theorem~\ref{T:FinWrNonSR}}\lb{step3}

By \cite{Bou86} binary trees $\{T_n\}_{n=1}^\infty$  admit
bilipschitz embeddings into any nonsuperreflexive space $X$ with
uniformly bounded distortions (see also \cite{Pis16} and
\cite{Ost16}), and, \buoo, we may assume that these embeddings do
not decrease any distances.

Using Mazur's method of constructing basic sequences (see
\cite[pp.~4-5]{LT77}), as  in \cite[Section 3]{Ost14},
one can construct  embeddings $j_n$, which
do not decrease distances and have uniformly bounded distortions, of an $\ell_\infty$-sum of six copies of $T_n$ into any non-superreflexive space $X$
\[j_n:T_n\oplus_\infty T_n\oplus_\infty T_n\oplus_\infty T_n\oplus_\infty
T_n\oplus_\infty T_n\to X.\]

Using \cite[Lemma 3.3]{Ost13a} or a direct argument, one can extend
 embeddings $j_n$ to bilipschitz embeddings  of
$\Pi_n\oplus_\infty\Pi_n\oplus_\infty\Pi_n$ into $X$ with
uniformly bounded distortions (recall that $\Pi_n$ denotes the $\ell_\infty$-sum of two metric
binary trees of depth $n$ each).

However, for our purpose, it suffices to extend embeddings $j_n$
to Lipschitz maps from $\Pi_n\oplus_\infty\Pi_n\oplus_\infty\Pi_n$ into $X$ with uniformly bounded Lipschitz constants. This
can be done directly: we extend the maps from a
binary tree to the metric edges using `linear interpolation': a point $u_t$
$(0<t<1)$ on the edge joining vertices $u_0$ and $u_1$ in the
metric tree corresponding to $T_n$ with $d(u_0,u_t)=t$ and
$d(u_t,u_1)=1-t$ is mapped onto the corresponding convex
combination of the images of $u_0$ and $u_1$.

We denote the obtained Lipschitz maps by
\[E_n:\Pi_n\oplus_\infty\Pi_n\oplus_\infty\Pi_n\to X.\]
By   construction, the maps $\{E_n\}_n$
are non-contractive on
$T_n\oplus_\infty T_n\oplus_\infty T_n\oplus_\infty
T_n\oplus_\infty T_n\oplus_\infty T_n$, and
their   Lipschitz constants
are bounded by a universal constant $C$.

We are now ready to
define the embeddings $F_n:\gn\to X$. We put
\[F_n(x,k)\DEF E_n(\bar{\vf}_{1,n}(x,k), \bar{\vf}_{2,n}(x,k),
\bar{\vf}_{3,n}(x,k)).\]

It follows   from Step 1 and our construction that the Lipschitz
constants of the maps $F_n$ are uniformly bounded .
To prove that maps $F_n$ are uniformly co-Lipschitz, we observe that for
any $(x,k),(y,l)\in\gn$, there
exists $i\in\{1,2,3\}$ such that both $(x,k)$ and $(y,l)$ are in
$\PP_{i,n}$. By  Step~1 and since the
restriction of $E_n$ to $T_n\oplus_\infty T_n\oplus_\infty
T_n\oplus_\infty T_n\oplus_\infty T_n\oplus_\infty T_n$ is
noncontractive, we get
\[\|F_n(x,k)-F_n(y,l)\|_X\ge d_\infty(\vf_{i,n}(x,k),\vf_{i,n}(y,l))\ge \frac14\rho((x,k),(y,l)),\]
which completes the proof of Theorem~\ref{T:FinWrNonSR}.

\section{Proof of Corollary~\ref{C:MetChLamp}}\lb{S:Tree-proof}

For finite groups.
As  we mentioned in the Introduction,
it is enough to prove that for all $n$, the Cayley graph of $\gn$ contains a subset  that is bilipschitz equivalent with an absolute constant to the
binary tree of depth $n/2$. As mentioned earlier, this fact was proved
in \cite{LPP96} for trees of depth $n/c$, for some $c>0$, but  below we obtain it as a direct consequence of our construction in Section~\ref{S:Step1}.

  Indeed, let $W_n$ be the set defined in Section~\ref{S:Step1}, and  let $U$ be the  subset of   $W_n$ consisting of all pairs $(A,B)\in W_n$ so that
$|A|\le \frac{n}2$, and
$B$ is a sequence consisting of $n-|A|$ zeroes. The distance in $(W_n,d_\infty)$ between
any two elements of the set $U$ is bounded below by the tree distance
of the corresponding $A$s, and is bounded above by twice the the
tree distance of the corresponding $A$s. Thus $U$ is 2-equivalent   to the
binary tree of depth $n/2$. Since, by Step~1, $\PP_{1,n}\subseteq \gn$ is bilipschitz equivalent with $W_n$, and by \cite{Bou86}, the proof for finite groups is complete.

For the infinite group, by Remark~\ref{R:InfLamp}, we only need to show
 that the bilipschitz embeddability of
$\mathbb{Z}_2\wr\mathbb{Z}$ into a Banach space $X$ implies that
$X$ is nonsuperreflexive. This follows similarly as in the case of
finite groups. By \cite{Woe05}, the Cayley  graph of
$\mathbb{Z}_2\wr\mathbb{Z}$ with the generating set  $S=\{t, ta\}$
coincides with the horocyclic product of two infinite binary
trees, i.e. infinite trees whose every vertex has degree 3, and
the identification is obtained by mapping every element $(x,k)\in
\mathbb{Z}_2\wr\mathbb{Z}$ to an element $(A_1,A_2,k)$ of two
infinite sequences of 0s and 1s with an (arbitrary) finite number
of nonzero terms, cf. also \cite{T17}. By Theorem~\ref{ST}, the
metric on $\mathbb{Z}_2\wr\mathbb{Z}$ is bilipschitz equivalent
with the metric inherited from the $\ell_1$-sum of two tree
metrics. Thus, as in the finite case, we see that
$\mathbb{Z}_2\wr\mathbb{Z}$ contains a bilipschitz copy of   an
infinite  rooted binary tree by taking the set of all elements of
the form $(A_1,A_2,k)$, where $k=0,1,2,\dots$,  $A_1$ is any
sequence of 0s and 1s so that all terms with indices larger than
$k$ are equal to 0, and all terms in  the sequence $A_2$ are
equal to 0.

The fact that bilipschitz embeddability of a binary tree into $X$
implies nonsuperreflexivity of $X$ follows from \cite{Bou86}, see
also \cite{Bau07}.

\thanks{ \textbf{Acknowledgements:}
We would like to thank Florent Baudier for suggesting the problem
on lamplighter groups to us. The first named author was supported
by the National Science Foundation under Grant Number
DMS--1700176. Both authors thank the Fields Institute (Toronto)
for partial funding to attend the Workshop on Large Scale Geometry
and Applications, where we started our work on lamplighter
groups.}


\begin{small}

\end{small}

\textsc{Department of Mathematics and Computer Science, St. John's
University, 8000 Utopia Parkway, Queens, NY 11439, USA} \par
  \textit{E-mail address}: \texttt{ostrovsm@stjohns.edu} \par
\smallskip

\textsc{Department of Mathematics, Miami University, Oxford, OH
45056, USA} \par
  \textit{E-mail address}: \texttt{randrib@miamioh.edu} \par

\end{document}